\newtheorem{thm}{Theorem}[section]
\newtheorem{lemm}[thm]{Lemma}
\newtheorem{coro}[thm]{Corollary}
\theoremstyle{definition}
\begin{document}

\title{A remark on decomposing the canonical representation of the Drinfeld curve}

\author{Zhe Chen \and Yushan Pan}

\address{Department of Mathematics, Shantou University, Shantou, 515821, China}
\email{zhechencz@gmail.com}

\address{}
\email{panys@mail.sustech.edu.cn}

\begin{abstract}
Recently, by studying an explicit basis, K\"ock and Laurent give the decomposition of the  $\overline{\mathbb{F}}_q[\mathrm{SL}_2(\mathbb{F}_q)]$-module of holomorphic forms on the Drinfeld curve. We present a crystalline cohomological proof of a weaker version of this result, without specifying a basis. As a by-product we observe a similar decomposition for the Gelfand--Graev representations.
\end{abstract}

\maketitle

\section{Introduction}

Decomposing the space of holomorphic forms on an algebraic curve with respect to a group action, which dates back to the work of Hecke \cite{Hecke1928}, is an intriguing problem.  Over the past decades much progress has been made, but the problem is not yet completely solved; details on the historical developments can be found in \cite{Bleher_Chinburg_Kontogeorgis_2020_Galois_structure} and \cite{koeck-laurent}. 

\vspace{2mm} In this short note we focus on a special case, and we revisit the main result of \cite{koeck-laurent}.

\vspace{2mm} Consider the algebraic group $G=\mathrm{SL}_2$ over $\mathbb{F}_q$. Let $F$ be the geometric Frobenius endomorphism on $G$; so $G^F$ is the finite group $\mathrm{SL}_2(\mathbb{F}_q)$. Let $C/\mathbb{F}_q$ be the projective Drinfeld curve, which is defined by the equation 
$$XY^{q}-X^{q}Y-Z^{q+1}=0.$$ 
Note that $G^F$ acts on $C$ via left matrix multiplications on 
$\begin{bmatrix}
X\\
Y
\end{bmatrix}$. 
We denote by $K$ a finite extension of $\mathrm{Frac}(W(\mathbb{F}_q))$ such that every complex irreducible representation of $G^F$ is realisable over $K$, where $W(\mathbb{F}_q)$ means the ring of Witt vectors over $\mathbb{F}_q$. Let $k$ be the residue field of $K$. We are interested in the $kG^F$-module $H^0(C_k,\Omega^1_{C_k})$ of global holomorphic forms. In literature this module is also referred to as the \emph{canonical representation} of $C_k$. 

\vspace{2mm} If $A$ is one of $K$ or $k$, and if $M$ is an $AG^F$-module, we denote by $[M]_A$ the image in the Grothendieck group $G_0(AG^F)$ of finitely generated $AG^F$-modules. Then the \emph{$G_0$-version} of \cite[Theorem~2.1]{koeck-laurent} can be stated as:
\begin{thm}[Laurent--K\"ock]\label{thm:main}
In $G_0(kG^F)$ we have the decomposition
$$[H^0(C_k,\Omega^1_{C_k})]_k=\sum_{i=0}^{q-2} [V_i]_k,$$
where $V_i:=\mathrm{Sym}^i(k^2)$ with $k^2$ being the natural representation of $G^F\subseteq\mathrm{GL}_2(k)$.
\end{thm}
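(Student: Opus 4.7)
The strategy is to lift the question to characteristic zero via crystalline cohomology, use the self-duality of every simple $k\mathrm{SL}_2(\mathbb{F}_q)$-module to replace $H^0(\Omega^1)$ by the full $H^1_{\mathrm{dR}}$, and reduce to a Lefschetz fixed-point computation on the Drinfeld curve.

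The Drinfeld equation is integral, so $C$ lifts to a smooth projective curve $C_W$ over $W=W(\mathbb{F}_q)$ with the same $G^F$-action (through $G^F\subset\mathrm{SL}_2(W)$). Cohomology-and-base-change for $C_W\to\mathrm{Spec}(W)$ makes both $H^0(C_W,\Omega^1_{C_W/W})$ and $H^1_{\mathrm{cris}}(C/W)$ free $W$-modules compatible with base change, so $[H^0(C_k,\Omega^1)]_k$ and $[H^1_{\mathrm{dR}}(C_k)]_k$ are the images under the decomposition map $d\colon G_0(KG^F)\to G_0(kG^F)$ of the corresponding characteristic-zero classes. Now each restricted simple $V_i=\mathrm{Sym}^i(k^2)$, $0\le i<p$, is $\mathrm{SL}_2$-self-dual via the invariant symplectic form on $k^2$, and by Steinberg's tensor product theorem self-duality propagates to every simple $kG^F$-module; hence the duality involution on $G_0(kG^F)$ is trivial. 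Combined with Serre duality and the (automatic) Hodge-to-de~Rham degeneration for a curve, this gives $[H^1_{\mathrm{dR}}(C_k)]_k = 2\,[H^0(\Omega^1_{C_k})]_k$, and $\sum_{i=0}^{q-2}[V_i]_k$ is likewise self-dual; it therefore suffices to prove
\[
[H^1_{\mathrm{dR}}(C_k)]_k \;=\; 2\sum_{i=0}^{q-2}[V_i]_k \quad\text{in } G_0(kG^F).
\]

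By the Brauer characterization of classes in $G_0(kG^F)$, this reduces to the identity $\chi_{H^1_{\mathrm{cris}}(C/W)\otimes K}(g) = 2\sum_{i=0}^{q-2}\chi_{\mathrm{Sym}^i(W^2)\otimes K}(g)$ for every $p$-regular $g\in G^F$. The left-hand side equals $2-L(g,C_K)$ by the Lefschetz fixed-point formula for the automorphism $g$ of $C_K$; for $g\ne I$ the Lefschetz number is the naive fixed-point count on $C(\overline{\mathbb{F}}_q)$, which by direct inspection of the Drinfeld equation is $2$ for split semisimple $g\ne\pm I$ (with fixed points $[1{:}0{:}0]$ and $[0{:}1{:}0]$), $0$ for non-split semisimple $g$, and $q+1$ for $-I$ (the divisor $\{Z=0\}$); the case $g=I$ is the dimension identity $2g_C=q(q-1)$. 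A routine geometric-series evaluation of the right-hand side on split and non-split tori matches each case. The main obstacle is the Grothendieck-group-level self-duality reduction---which is precisely what allows one to bypass the explicit basis construction used in \cite{koeck-laurent}---together with the bookkeeping of the resulting character identity; the crystalline framework itself serves only to package the lift and legitimize the Lefschetz trace formula.
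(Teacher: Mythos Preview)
Your overall architecture coincides with the paper's: lift $C$ to $W$, use torsion-freeness of $H^1_{\mathrm{cris}}$ together with Hodge--de~Rham degeneration to write $[H^1_{\mathrm{dR}}(C_k)]_k=[H^0(\Omega^1)]_k+[\overline{H^0(\Omega^1)}]_k$, invoke self-duality of every simple $kG^F$-module to halve, and then identify $[H^1_{\mathrm{dR}}(C_k)]_k$ with $2\sum_{i=0}^{q-2}[V_i]_k$. The paper carries out exactly these steps (Lemma~\ref{lemma:crystalline and de Rham k-rep of C} and the final proof), so on the structural level the two arguments agree.

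Where you genuinely diverge is in the computation of $[H^1_{\mathrm{dR}}(C_k)]_k$. The paper passes through $\ell$-adic cohomology via Katz--Messing, identifies $H^1(C,\overline{\mathbb{Q}}_\ell)$ with $-\sum_i R^{\theta_i}$ using the open--closed sequence for $C=D\sqcup(C\setminus D)$ and known facts about Deligne--Lusztig characters of $\mathrm{SL}_2(\mathbb{F}_q)$, and then quotes Bonnaf\'e's explicit formula $d([R^{\theta_i}]_K)=-[V_{i-2}]_k-[V_{q-i-1}]_k$ for the decomposition map. You instead verify the Brauer-character identity directly by a Lefschetz fixed-point count on $C$ for each $p$-regular conjugacy class (split torus: two simple fixed points at infinity; non-split torus: none, since the eigenlines are not $\mathbb{F}_q$-rational and hence miss $C\cap\{Z=0\}$; $-I$: the $q+1$ points of $\mathbb{P}^1(\mathbb{F}_q)$), matched against the geometric-series evaluation of $\sum_{i=0}^{q-2}\chi_{\mathrm{Sym}^i}$. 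This is correct---the fixed points are all transversal, so the naive count equals the Lefschetz number---and it is more self-contained than the paper's route, avoiding both Deligne--Lusztig theory and the cited decomposition-map formula. What you lose is the paper's by-product: the identification of $H^1(C_k/W)\otimes\overline{K}$ with a combination of Gelfand--Graev representations (Lemma~\ref{lemm:crytalline rep of C and GG rep} and Corollary~\ref{coro:GGGR}) arises naturally from their Deligne--Lusztig description but is invisible in your fixed-point calculation.
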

Actually, \cite{koeck-laurent} proves the much stronger result on the corresponding module decomposition, rather than the above $G_0$-version. The method used in \cite{koeck-laurent} is  to construct an explicit basis of $H^0(C_k,\Omega^1_{C_k})$, and then study the $G^F$-action on it. Our aim is to record a less explicit, but probably more conceptual argument of the above result, by using  crystalline cohomology. Note that similar methods are already carried on in literature, in various situations (see e.g. \cite{HAASTERT199077} and \cite{Hortsch2012}), in which usually a basis of the de Rham cohomology is constructed, and explicit computations are made using \v{C}ech cohomology. Here we will be concerned with a simpler route, without computing a basis or \v{C}ech cohomology. Along the way we find that the Gelfand--Graev representations of $G^F$ naturally appear, and admit a similar decomposition (see Lemma~\ref{lemm:crytalline rep of C and GG rep} and Corollary~\ref{coro:GGGR}).

\vspace{2mm} \noindent {\bf Acknowledgement.} We are grateful to Bernhard K\"ock for several helpful comments, and thank K\"ock and Bonnaf\'e for pointing out a gap in an earlier version of this paper. This work is partially supported  by   Natural Science Foundation of Guangdong No.~2023A1515010561 and NSFC No.~12171297.

\section{Representations and cohomology}

Throughout, we shall use the notations 
$$H^i(-/W),\ H_{dR}^i(-/k),\ H^i(-,\overline{\mathbb{Q}}_{\ell}),\ H_c^i(-,\overline{\mathbb{Q}}_{\ell}),$$
for the crystalline cohomology (a $W$-module, where $W:=W(k)$ is the ring of Witt vectors over $k$), the de Rham cohomology (a $k$-space), the $\ell$-adic cohomology (a $\overline{\mathbb{Q}}_{\ell}$-space), and the compactly supported version of $\ell$-adic cohomology (a $\overline{\mathbb{Q}}_{\ell}$-space), respectively. Here $\ell$ is a prime not equal to $p:=\mathrm{char}(k)$.

\vspace{2mm} Consider the affine plane curve $D$ given by the equation $xy^q-x^qy-1=0$; this is the open affine subset of $C$ at $Z=1$, on which $G^F$ acts as well. Let $T$ be an $F$-stable non-split torus of $G$. Then $T^F\cong\mu_{q+1}\subseteq\mathbb{F}_{q^2}$ acts on $C$ and $D$ via the scalar multiplication on
$\begin{bmatrix}
X\\
Y
\end{bmatrix}$. 
We will use the knowledge of Deligne--Lusztig (virtual) representations of $G^F$ attached to $T$. These representations are defined to be
$$R^{\theta}:=\sum_i(-1)^iH^i_c(D,\overline{\mathbb{Q}}_{\ell})_{\theta},$$
where $\theta\in\mathrm{Irr}(T^F)$ and the subscript $(-)_{\theta}$ means we are taking the $\theta$-isotypical part. They are all of dimension $1-q$, and unless $\theta=1$, we have $R^{\theta}=-H^1_c(D,\overline{\mathbb{Q}}_{\ell})_{\theta}$ (for the details we refer to \cite{Bonnafe_2011_rep_SL2_book}); let us denote the non-trivial $\theta$'s by $\theta_1,\cdots,\theta_q$.

\begin{lemm}\label{lemma:l-adic rep of C}
As representations of $G^F$, we have 
$$H^1(C,\overline{\mathbb{Q}}_{\ell})\cong-\sum_{i=1}^{q} R^{\theta_i}
\quad\textrm{and}\quad
H^2(C,\overline{\mathbb{Q}}_{\ell})=H^0(C,\overline{\mathbb{Q}}_{\ell})=1_{G^F}.$$ 
\end{lemm}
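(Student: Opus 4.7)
The plan is to apply the excision long exact sequence for the open embedding $D\hookrightarrow C$ with closed complement $Z:=C\setminus D$, then decompose along the characters of $T^F$, which commutes with $G^F$. First I identify $Z$: setting $Z=0$ in the defining equation gives $XY^q-X^qY=0$, whose projective solutions are the $q+1$ rational points of $\mathbb{P}^1(\mathbb{F}_q)$ embedded as $[X:Y]\mapsto[X:Y:0]$. On these, $G^F$ acts as on $G^F/B^F$ (with $B^F$ the upper-triangular Borel), so $H^0(Z,\overline{\mathbb{Q}}_{\ell})\cong\mathrm{Ind}_{B^F}^{G^F}(1_{G^F})$ as a $G^F$-module. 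Crucially, $T^F$ acts on each point $[X:Y:0]$ via $[tX:tY:0]=[X:Y:0]$, so it fixes $Z$ pointwise; hence $T^F$ acts trivially on $H^0(Z,\overline{\mathbb{Q}}_{\ell})$ and on $H^0(C,\overline{\mathbb{Q}}_{\ell})=1_{G^F}$.

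Next I write down the excision sequence of $T^F\times G^F$-modules
$$0\to H^0(C,\overline{\mathbb{Q}}_{\ell})\to H^0(Z,\overline{\mathbb{Q}}_{\ell})\to H^1_c(D,\overline{\mathbb{Q}}_{\ell})\to H^1(C,\overline{\mathbb{Q}}_{\ell})\to 0,$$
together with the induced isomorphism $H^2_c(D,\overline{\mathbb{Q}}_{\ell})\xrightarrow{\sim}H^2(C,\overline{\mathbb{Q}}_{\ell})$; the cohomology of the zero-dimensional $Z$ vanishes in positive degrees. Since $C$ is smooth projective and connected, $H^0(C)=H^2(C)=1_{G^F}$ as $G^F$-modules. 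Decomposing the sequence along the $T^F$-characters $\{1,\theta_1,\dots,\theta_q\}$: for each non-trivial $\theta_i$ the $\theta_i$-isotypes of $H^0(C)$ and $H^0(Z)$ vanish (as $T^F$ acts trivially on both), so exactness gives $H^1(C,\overline{\mathbb{Q}}_{\ell})_{\theta_i}\cong H^1_c(D,\overline{\mathbb{Q}}_{\ell})_{\theta_i}=-R^{\theta_i}$, the last equality being the description of Deligne--Lusztig representations recalled just before the lemma.

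It remains to show $H^1(C,\overline{\mathbb{Q}}_{\ell})_1=0$, which I would obtain by a dimension count avoiding any explicit determination of $R^1$. The curve $C$ is a smooth plane curve of degree $q+1$, hence has genus $g(C)=q(q-1)/2$, so $\dim H^1(C,\overline{\mathbb{Q}}_{\ell})=q(q-1)$; on the other hand $\sum_{i=1}^{q}\dim(-R^{\theta_i})=q(q-1)$, since each $R^{\theta_i}$ has dimension $1-q$. The non-trivial $T^F$-isotypes therefore already exhaust $H^1(C,\overline{\mathbb{Q}}_{\ell})$, forcing $H^1(C,\overline{\mathbb{Q}}_{\ell})_1=0$, and summing over $\theta$ gives the claimed identity. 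The only subtle point I anticipate is bookkeeping the $T^F$-equivariance throughout the excision sequence; the remaining ingredients (the genus computation, the vanishing of $H^0_c(D)$ and $H^1(Z)$, and the structure of the $R^{\theta_i}$) are all standard.
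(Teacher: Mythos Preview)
Your argument is correct and follows essentially the same route as the paper: both set up the excision long exact sequence for $C=D\sqcup Z$, identify $Z$ with $\mathbb{P}^1(\mathbb{F}_q)$ carrying the permutation representation $1_{G^F}+\mathrm{St}=\mathrm{Ind}_{B^F}^{G^F}1$, and read off $H^1(C,\overline{\mathbb{Q}}_\ell)$ from there.

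The one genuine difference is how the trivial $T^F$-isotype is handled. The paper uses the Deligne--Lusztig identity $R^1=1_{G^F}-\mathrm{St}$ to compute $H^1_c(D,\overline{\mathbb{Q}}_\ell)_1=\mathrm{St}$, which then cancels against the Steinberg summand of $H^0(Z,\overline{\mathbb{Q}}_\ell)$ in the exact sequence. You instead bypass $R^1$ entirely: once the non-trivial isotypes contribute $q(q-1)$ dimensions and the genus formula gives $\dim H^1(C,\overline{\mathbb{Q}}_\ell)=q(q-1)$, the trivial isotype is forced to vanish. Your route is marginally more self-contained (no need to invoke the structure of $R^1$), while the paper's makes the Steinberg cancellation visible; both are equally short.
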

\begin{proof}
Consider the long exact sequence associated with the partition $C=D\sqcup (C\setminus D)$:
\begin{equation*}
\begin{split}
0 &\rightarrow H_c^0(D,\overline{\mathbb{Q}}_{\ell}) \rightarrow H^0(C,\overline{\mathbb{Q}}_{\ell}) \rightarrow H^0(C\setminus D,\overline{\mathbb{Q}}_{\ell})\rightarrow H_c^1(D,\overline{\mathbb{Q}}_{\ell}) \rightarrow H^1(C,\overline{\mathbb{Q}}_{\ell}) \\
& \rightarrow H^1(C\setminus D,\overline{\mathbb{Q}}_{\ell}) \rightarrow H_c^2(D,\overline{\mathbb{Q}}_{\ell}) \rightarrow H^2(C,\overline{\mathbb{Q}}_{\ell}) \rightarrow H^2(C\setminus D,\overline{\mathbb{Q}}_{\ell}) \rightarrow \ldots
\end{split}
\end{equation*}
Since $D$ is an affine curve and $C\setminus D$ is a finite set, this exact sequence splits to be
$$0\longrightarrow H^0(C,\overline{\mathbb{Q}}_{\ell})\longrightarrow H^0(C\setminus D,\overline{\mathbb{Q}}_{\ell})\longrightarrow \mathrm{St}-\sum_iR^{\theta_i}\longrightarrow H^1(C,\overline{\mathbb{Q}}_{\ell})\longrightarrow 0$$
and
$$0\longrightarrow 1_{G^F} \longrightarrow H^2(C,\overline{\mathbb{Q}}_{\ell})\longrightarrow0,$$
where $\mathrm{St}$ denotes the Steinberg representation. Note that the $G^F$-action on $C\setminus D$ can be viewed as the natural $G^F$-action on $\mathbb{P}^1(\mathbb{F}_q)$, which corresponds to the representation $1_{G^F}+\mathrm{St}$. Now the assertion follows from a simple calculation.
\end{proof}

\begin{lemm}\label{lemm:crytalline rep of C}
One has
$$H^1(C_k/W)\otimes_W\overline{K}
\cong
-\sum_{i=1}^{q} R^{\theta_i},$$
as representations of $G^F$, after specifying an isomorphism of abstract fields $\overline{K}\cong \overline{\mathbb{Q}}_{\ell}$. 
\end{lemm}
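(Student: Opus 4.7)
The plan is to reduce the lemma to Lemma~\ref{lemma:l-adic rep of C} by producing, after fixing an abstract isomorphism $\overline{K}\cong \overline{\mathbb{Q}}_{\ell}$, a $G^F$-equivariant isomorphism
$$H^1(C_k/W)\otimes_W\overline{K}\;\cong\; H^1(C,\overline{\mathbb{Q}}_{\ell}).$$
Since $G^F$ is finite and both coefficient fields are algebraically closed of characteristic zero, ordinary character theory reduces this to verifying that $\operatorname{tr}(g\mid H^1(C_k/W)\otimes K)=\operatorname{tr}(g\mid H^1(C,\overline{\mathbb{Q}}_{\ell}))$ for every $g\in G^F$.

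To carry out this trace comparison, I would invoke the Lefschetz trace formula in each theory. Each $g\in G^F$ acts as a finite-order algebraic automorphism of the smooth projective curve $C_k$, and both the crystalline and the $\ell$-adic Lefschetz trace formulas yield
$$\sum_{i=0}^{2}(-1)^i\operatorname{tr}(g^*\mid H^i)\;=\;\deg(\Gamma_g\cdot \Delta_{C_k}),$$
where the right-hand side is a purely geometric intersection number independent of the cohomology theory. Since $C_k$ is geometrically connected, $H^0$ carries the trivial $G^F$-representation in either theory; and since $g$ has degree one, it acts trivially on the fundamental class, so $H^2$ is the trivial representation in either theory as well. Cancelling these $H^0$ and $H^2$ contributions from the two Lefschetz identities leaves exactly the required equality of traces on $H^1$. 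Combined with Lemma~\ref{lemma:l-adic rep of C}, this gives the lemma.

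The main technical point to justify is the crystalline Lefschetz trace formula for the automorphisms $g\in G^F$, which may have order $p$; this statement is most commonly quoted for powers of Frobenius. However, for a nontrivial finite-order automorphism of a smooth proper curve the graph meets the diagonal in a zero-dimensional subscheme, and the trace formula then follows from Poincar\'e duality and the cycle class map in crystalline cohomology, both available in this generality; the case $g=\mathrm{id}$ is trivial as both sides are just Betti numbers. Beyond this point the argument is essentially formal.
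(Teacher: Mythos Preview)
Your argument is correct and follows the same overall strategy as the paper: reduce to Lemma~\ref{lemma:l-adic rep of C} by proving that $H^1(C_k/W)\otimes_W\overline{K}$ and $H^1(C,\overline{\mathbb{Q}}_{\ell})$ have the same $G^F$-character, using the graphs $\Gamma_g\subseteq C\times C$. The difference lies in how the trace comparison is justified. The paper invokes the theorem of Katz--Messing, which (via weights from Deligne's proof of the Weil conjectures) gives directly that an algebraic correspondence has the same characteristic polynomial on $H^i_{\mathrm{crys}}$ and on $H^i_{\ell}$ \emph{for each fixed $i$}. You instead compare the two Lefschetz trace formulas, obtaining equality only of the alternating sums of traces, and then strip off the $H^0$ and $H^2$ contributions by hand. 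Your route is more elementary in that it avoids weights entirely and uses only Poincar\'e duality and the cycle class map in crystalline cohomology (available since Berthelot's thesis); on the other hand it is specific to curves, whereas the Katz--Messing citation would work in any dimension without extra bookkeeping. Either way, your acknowledgement that the crystalline Lefschetz formula must be checked for finite-order automorphisms (not just Frobenius) is exactly the right caveat, and the justification you sketch is adequate for a smooth projective curve.
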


\begin{proof}
By Lemma~\ref{lemma:l-adic rep of C}, it suffices to show that 
$$H^1(C_k/W)\otimes_W\overline{K}\cong H^1(C,\overline{\mathbb{Q}}_{\ell})$$
as representations of $G^F$; this is the special case of \cite[Theorem~2]{Katz_Messing_1974_Consequences_Riemann_hypothesis} applying to the algebraic cycles  $\Gamma_g\subseteq C\times C$ (the graph of the action of $g\in G^F$).
\end{proof}

Let 
$$d\colon G_0(KG^F)\longrightarrow G_0(kG^F)$$ 
be the decomposition map (\cite[15.2]{serre1977lin_rep_finite_gr}). Then \cite[Proposition~10.2.9(d)]{Bonnafe_2011_rep_SL2_book} tells that:

\begin{lemm}\label{lemma: decomposition map of DL rep}
One can arrange the order of $\theta_1,\cdots,\theta_q$, so that 
$$d([R^{\theta_i}]_K)=-[V_{i-2}]_k-[V_{q-i-1}]_k$$
for every $i=1,\cdots,q$, where $V_{-1}:=0$.
\end{lemm}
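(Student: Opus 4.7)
The plan is to compare both sides via Brauer characters on the $p$-regular conjugacy classes of $G^F$. Since the decomposition map $d$ is determined by Brauer-character values on $p'$-elements, the identity will follow from a class-by-class check. The $p$-regular classes of $\mathrm{SL}_2(\mathbb{F}_q)$ are represented by the identity, the central element $-1$, a regular element $s$ of the split torus $S^F$, and a regular element $t$ of the non-split torus $T^F$.

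First, I would fix a generator $\zeta$ of $T^F\cong\mu_{q+1}$ together with its Teichmüller lift $\hat\zeta\in\overline{K}^\times$, and order the nontrivial characters by $\theta_i(\zeta^m):=\hat\zeta^{im}$ for $i=1,\dots,q$. I would then invoke the Deligne--Lusztig character formula (e.g.\ Digne--Michel, Theorem~12.2.2 applied to $G=\mathrm{SL}_2$ with non-split $T$) to record the values of $R^{\theta_i}$: namely $1-q$ at the identity, $(1-q)\theta_i(-1)$ on the centre, $\theta_i(t)+\theta_i(t^{-1})$ on regular non-split $t$, and $0$ on regular split $s$. In parallel, the Brauer character of $V_j=\mathrm{Sym}^j(k^2)$ at a semisimple element with Teichmüller eigenvalues $\hat\alpha,\hat\alpha^{-1}$ has the Weyl-type closed form $(\hat\alpha^{j+1}-\hat\alpha^{-(j+1)})/(\hat\alpha-\hat\alpha^{-1})$.

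The identity and central checks reduce to $\dim V_j=j+1$ and the fact that $-1$ acts on $V_j$ by $(-1)^j$. On a regular split element one uses $\hat\alpha^{q-1}=1$ to rewrite the Brauer character of $V_{q-i-1}$ as the negative of that of $V_{i-2}$, so the two summands cancel and match $R^{\theta_i}(s)=0$. On a regular non-split element $\zeta^m$ the analogous relation $\hat\zeta^{q+1}=1$ allows one to combine the two Brauer characters; a brief telescoping collapses their sum to $-(\hat\zeta^{im}+\hat\zeta^{-im})$, which matches $-R^{\theta_i}(\zeta^m)$ under the chosen parametrisation.

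The main obstacle is bookkeeping rather than anything conceptual: the two algebraic relations $\hat\alpha^{q-1}=1$ on $S^F$ and $\hat\zeta^{q+1}=1$ on $T^F$ play symmetric but distinct roles and must be tracked separately, and the ordering of the $\theta_i$ has to be calibrated to the Teichmüller identification so that the asymmetric pair $V_{i-2}$ and $V_{q-i-1}$ comes out correctly. The potentially awkward case $2i\equiv 0\pmod{q+1}$, where $\theta_i$ has order $2$ and $R^{\theta_i}$ itself decomposes into exceptional constituents, requires no special treatment in this Brauer-character argument, since $R^{\theta_i}$ is unambiguous as a virtual character.
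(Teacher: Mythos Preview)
Your proposal is correct: the class-by-class Brauer-character comparison goes through exactly as you outline, and the boundary cases $i=1$ and $i=q$ (where $V_{-1}=0$) as well as the quadratic case $\theta_i^2=1$ cause no trouble, for the reasons you give.

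The paper, however, does not prove this lemma at all: it simply quotes \cite[Proposition~10.2.9(d)]{Bonnafe_2011_rep_SL2_book} and moves on. So your approach is not so much \emph{different} from the paper's as it is a self-contained substitute for an external citation. What your argument buys is independence from Bonnaf\'e's monograph at this one point (though the paper still relies on that reference elsewhere, e.g.\ for the fact that the $[V_i]_k$ form a $\mathbb{Z}$-basis of $G_0(kG^F)$). In fact your computation is essentially what one would carry out to verify Bonnaf\'e's statement directly, so the two are compatible rather than in tension.
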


\begin{lemm}\label{lemma:crystalline and de Rham k-rep of C}
we have 
$$[H_{dR}^1(C_k/k)]_k=[H^1(C_k/W)\otimes_W k]_k=2\sum_{i=0}^{q-2}[V_i]_k.$$
\end{lemm}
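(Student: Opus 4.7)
The plan is to compute both $[H^1_{dR}(C_k/k)]_k$ and $[H^1(C_k/W)\otimes_W k]_k$ directly from the $K$-side via the decomposition map, by exploiting Lemma~\ref{lemm:crytalline rep of C}, which already identifies $[H^1(C_k/W)\otimes_W K]_K$ with $-\sum_{i=1}^q [R^{\theta_i}]_K$, together with Lemma~\ref{lemma: decomposition map of DL rep}, which tells us what $d$ does to each $[R^{\theta_i}]_K$. For both equalities to go through cleanly, the essential input is that $H^1(C_k/W)$ is a $W$-free $W[G^F]$-module.

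To secure this freeness I would exploit that the Drinfeld curve is defined by a polynomial with integer coefficients: the same equation $XY^q - X^q Y - Z^{q+1} = 0$ over $W$ cuts out a smooth projective curve $\tilde{C}/W$ whose special fibre is $C_k$, and the $G^F$-action by matrix multiplication on $[X:Y]$ is likewise defined over $\mathbb{Z}$ and lifts to $\tilde{C}$. For such a smooth projective lift, the crystalline--de Rham comparison supplies a $W[G^F]$-equivariant isomorphism $H^i(C_k/W) \cong H^i_{dR}(\tilde{C}/W)$, and the right-hand side is $W$-flat (hence free) by standard base change for smooth proper morphisms: the Hodge cohomologies $H^0(\tilde{C},\Omega^1)$ and $H^1(\tilde{C},\mathcal{O})$ are $W$-flat of constant fibrewise rank, the Hodge--de Rham spectral sequence of a smooth proper curve degenerates, and so $H^i_{dR}(\tilde{C}/W)$ is $W$-free and commutes with $W \to k$.

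With $W$-freeness in hand, both equalities drop out. For the first, flat base change along $W \to k$ gives a $kG^F$-equivariant isomorphism
$$H^1(C_k/W)\otimes_W k \;\cong\; H^1_{dR}(\tilde{C}/W)\otimes_W k \;\cong\; H^1_{dR}(C_k/k).$$
For the second, $H^1(C_k/W)$ is a $G^F$-stable $W$-lattice inside $H^1(C_k/W)\otimes_W K$, so by the very definition of the decomposition map,
$$d\bigl([H^1(C_k/W)\otimes_W K]_K\bigr) = [H^1(C_k/W)\otimes_W k]_k.$$
Combining Lemmas~\ref{lemm:crytalline rep of C} and~\ref{lemma: decomposition map of DL rep}, the left-hand side equals
$$-\sum_{i=1}^q d([R^{\theta_i}]_K)=\sum_{i=1}^q\bigl([V_{i-2}]_k + [V_{q-i-1}]_k\bigr) = 2\sum_{j=0}^{q-2}[V_j]_k,$$
where the last equality uses $V_{-1}=0$ so that the two re-indexings $j=i-2$ and $j=q-i-1$ each sweep through $V_0,\dots,V_{q-2}$ exactly once.

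The main obstacle is precisely the $W$-freeness claim; once it is in hand, the rest is a short manipulation of the decomposition map. An alternative to the explicit lift would be to appeal to the general fact that for any smooth projective curve in characteristic $p$, $H^1_{\mathrm{cris}}(C/W)$ is the (covariant) Dieudonn\'e module of $\mathrm{Pic}^0(C)$ and is therefore $W$-torsion-free; but the explicit lift is immediate here and simultaneously delivers the base change needed for the first equality.
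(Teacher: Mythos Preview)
Your proof is correct and follows the same overall strategy as the paper: establish that $H^1(C_k/W)$ is $W$-free, then feed Lemma~\ref{lemm:crytalline rep of C} and Lemma~\ref{lemma: decomposition map of DL rep} through the decomposition map. The technical justifications differ slightly. For the first equality, the paper invokes the universal coefficient exact sequence
\[
0 \longrightarrow H^1(C_k/W)\otimes_W k \longrightarrow H^1_{dR}(C_k/k) \longrightarrow \mathrm{Tor}^W_1(H^2(C_k/W),k) \longrightarrow 0
\]
together with $H^2(C_k/W)\cong W$, whereas you pass through the crystalline--de Rham comparison for the explicit lift $\tilde{C}/W$ and then base change. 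For torsion-freeness of $H^1(C_k/W)$, the paper cites Illusie's general fact that the first crystalline cohomology of any smooth projective variety is torsion-free, while you extract it from the lift (and note the Dieudonn\'e-module alternative). Your route is pleasantly self-contained here, since the lift is immediate and is in any case invoked later in the proof of Theorem~\ref{thm:main} for Hodge--de Rham degeneration; the paper's route is more general and sidesteps having to check degeneration and flatness for the relative curve over $W$.
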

\begin{proof}
Since $C_k$ is a curve, $H^2(C_k/W)\cong W$ (\cite[3.5]{Illusie_1974_1975_Report_CrystallineCohom}), so the first equality follows from the canonical short exact sequence (see \cite[3.4]{Illusie_1974_1975_Report_CrystallineCohom})
$$0\longrightarrow H^1(C_k/W)\otimes_W k\longrightarrow H_{dR}^1(C_k/k)\longrightarrow \mathrm{Tor}^W_{1}(H^{2}(C_k/W),k) \longrightarrow 0.$$
The second equality,  by Lemma~\ref{lemm:crytalline rep of C} and Lemma~\ref{lemma: decomposition map of DL rep}, would follow from that $H^1(C/W)$ has no torsion; the latter assertion holds true for the first crystalline cohomology of any smooth projective variety (see \cite[3.11.2 and 7.1]{Illusie_1979_ANES_deRham_Witt}).
\end{proof}

\begin{proof}[Proof of Theorem~\ref{thm:main}.]
Note that the same equation of $C$ defines a smooth lift of $C$ over $W$, so the Hodge--de Rham spectral sequence of $C_k$ degenerates at the first page (\cite[Corollaire~2.4]{Deligne_Illusie_1987_deRham}), thus by Poincar\'e duality (see e.g.\ \cite[Lemma~50.20.1]{Stacks_Project}) we get
$$[H_{dR}^1(C_k/k)]_k=[H^0(C_k,\Omega_{C_k}^1)]_k+[\overline{H^0(C_k,\Omega_{C_k}^1)}]_k,$$
in which $\overline{(-)}$ denotes the dual representation. So Lemma~\ref{lemma:crystalline and de Rham k-rep of C}  gives that
$$[H^0(C_k,\Omega_{C_k}^1)]_k+[\overline{H^0(C_k,\Omega_{C_k}^1)}]_k=2\sum_{i=0}^{q-2}[V_i]_k.$$
As $\{[V_0]_k,[V_1]_k,\cdots,[V_{q-1}]_k\}$ is a $\mathbb{Z}$-basis of the group $G_0(kG^F)$ \cite[10.2.3]{Bonnafe_2011_rep_SL2_book},  and as each $V_i$ has the same Brauer character with $\overline{V_i}$ by construction, we conclude that
$$[H^0(C_k,\Omega_{C_k}^1)]_k=\sum_{i=0}^{q-2}[V_i]_k,$$
as desired.
\end{proof}

\section{Gelfand--Graev representations}

Let $B$ be the subgroup of $G=\mathrm{SL}_2$ consisting of upper-triangular matrices, and let $U$ be its unipotent radical. Then the diagonal subgroup of $G$, denoted by $S$, acts on $U$ by conjugation. If $q$ is odd, the \emph{non-trivial} irreducible characters of $U^F$ form two $S^F$-orbits; for each of the two orbits we choose a representative $\psi_i$ ($i=1,2$). If $q$ is even, there is only one such $S^F$-orbit, and we use both $\psi_1$ and $\psi_2$ to denote the same chosen representative. Then
$$\Gamma_i:=\mathrm{Ind}_{U^F}^{G^F}\psi_i$$
are the so-called Gelfand--Graev representations of  $G^F=\mathrm{SL}_2(\mathbb{F}_q)$, which are independent of the choice of the representatives $\psi_i$ and are multiplicity-free. Gelfand--Graev representations play an important role in the representation theory of finite groups of Lie type, and we refer to \cite[12.3]{DM_book_2nd_edition} for more details.

\vspace{2mm} Combining Lemma~\ref{lemm:crytalline rep of C} and $\Gamma_i$ we observe a simple algebraic realisation of the cohomological representation $H^1(C_k/W)\otimes_W\overline{K}$:

\begin{lemm}\label{lemm:crytalline rep of C and GG rep}
One has
$$H^1(C_k/W)\otimes_W\overline{K}
\cong \Gamma_1+\Gamma_2-\mathrm{Ind}_{U^F}^{G^F}1_{U^F}-\mathrm{Ind}_{B^F}^{G^F}1_{B^F}+2\cdot1_{G^F}.$$
\end{lemm}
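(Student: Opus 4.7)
The plan is to reduce via Lemma~\ref{lemm:crytalline rep of C} to verifying the equivalent identity
$$-\sum_{i=1}^q R^{\theta_i} \;=\; \Gamma_1+\Gamma_2-\mathrm{Ind}_{U^F}^{G^F}1_{U^F}-\mathrm{Ind}_{B^F}^{G^F}1_{B^F}+2\cdot 1_{G^F}$$
in the Grothendieck group of virtual $\overline{K}G^F$-modules, and then to prove this identity by expanding both sides in the $\mathbb{Z}$-basis of irreducible characters of $G^F=\mathrm{SL}_2(\mathbb{F}_q)$.

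For the right-hand side I would use three facts from~\cite{Bonnafe_2011_rep_SL2_book}: first, $\mathrm{Ind}_{B^F}^{G^F}1_{B^F} = 1_{G^F} + \mathrm{St}$; second, by transitivity of induction, $\mathrm{Ind}_{U^F}^{G^F}1_{U^F} \cong \mathrm{Ind}_{B^F}^{G^F}(\mathrm{reg}_{T^F_{\mathrm{split}}}) \cong \bigoplus_{\alpha\in\mathrm{Irr}(T^F_{\mathrm{split}})}\mathrm{Ind}_{B^F}^{G^F}\alpha$, which decomposes into $1+\mathrm{St}$ (from $\alpha=1$), the regular principal series (each appearing with multiplicity $2$), and, when $q$ is odd, the split constituents $\pi^\pm(\alpha_0)$; and third, by Frobenius reciprocity each non-trivial irreducible $\chi$ of $G^F$ appears in $\Gamma_1+\Gamma_2$ with multiplicity $\dim(\chi|_{U^F})_{\psi_1} + \dim(\chi|_{U^F})_{\psi_2}$, whose value for each irreducible class (trivial, Steinberg, (half-)principal series, (half-)discrete series) is standard. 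For the left-hand side I would invoke $R^{\theta} = -\rho(\theta)$ for regular $\theta$ and, when $q$ is odd, $R^{\theta_0} = -\rho^+(\theta_0) - \rho^-(\theta_0)$ for the order-$2$ character.

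Combining, a term-by-term check against each irreducible shows that the non-cuspidal contributions on the right cancel (for instance, Steinberg: $2-1-1=0$; regular principal series: $2-2-0=0$; trivial: $0-1-1+2=0$; and analogously for $\pi^\pm(\alpha_0)$), leaving exactly the discrete-series part of $\Gamma_1+\Gamma_2$, which coincides with $-\sum R^{\theta_i}$. The main obstacle is uniformly tracking both parities of $q$: for $q$ odd one must handle the split pieces $\pi^\pm(\alpha_0)$ and $\rho^\pm(\theta_0)$ contributed by the order-$2$ characters of the two tori, each with multiplicity one rather than two; for $q$ even these do not arise and one has $\Gamma_1=\Gamma_2$. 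The formulation of the lemma is already designed so that the cancellations above work verbatim in both cases.
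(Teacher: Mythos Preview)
Your reduction via Lemma~\ref{lemm:crytalline rep of C} to the virtual-character identity
\[
-\sum_{i=1}^q R^{\theta_i} \;=\; \Gamma_1+\Gamma_2-\mathrm{Ind}_{U^F}^{G^F}1_{U^F}-\mathrm{Ind}_{B^F}^{G^F}1_{B^F}+2\cdot 1_{G^F}
\]
is exactly what the paper does. Where you diverge is in how you verify this identity: you expand each side in the basis of irreducible characters (principal series, half-principal series, Steinberg, discrete series, half-discrete series) and check multiplicities term by term, whereas the paper instead evaluates both sides on conjugacy classes, using only that the induced character $\Gamma_1+\Gamma_2$ vanishes off unipotent elements and equals $-2$ at regular unipotents, together with the tabulated values of $R^{\theta_i}$ from \cite[Table~5.4]{Bonnafe_2011_rep_SL2_book}. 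Your route is correct and has the virtue of making visible \emph{why} the principal-series pieces cancel; the paper's route is shorter and sidesteps the parity-dependent bookkeeping of the split constituents $\pi^\pm(\alpha_0)$ and $\rho^\pm(\theta_0)$ that you flag as the main obstacle.
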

\begin{proof}
Note that the character values of $\Gamma_1+\Gamma_2$ can be easily computed using the formula for induced characters: At non-unipotent elements the character values are zero, and at regular unipotent elements the character values are  $-2$. Meanwhile, the character values of $R^{\theta_i}$ can be found, e.g.\ in \cite[Table~5.4]{Bonnafe_2011_rep_SL2_book} (this table is for odd $q$; for even $q$ the similar table holds after removing the representations of dimension $(q\pm1)/2$). Then the assertion follows from Lemma~\ref{lemm:crytalline rep of C} and a direct comparison of character values.
\end{proof}

\begin{coro}\label{coro:GGGR}
In $G_0(kG^F)$ we have
$$d([\Gamma_1]_K)=d([\Gamma_2]_K)=2\sum_{i=1}^{q-2} [V_i]_k+[V_0]_k+[V_{q-1}]_k.$$
\end{coro}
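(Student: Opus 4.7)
The plan is to apply the decomposition map $d\colon G_0(KG^F)\to G_0(kG^F)$ to the virtual identity of Lemma~\ref{lemm:crytalline rep of C and GG rep}, substitute the left-hand side using Lemma~\ref{lemma:crystalline and de Rham k-rep of C}, and solve for $d([\Gamma_i]_K)$.

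The first step is to observe that
$$d([\Gamma_1]_K)=d([\Gamma_2]_K)=d([\mathrm{Ind}_{U^F}^{G^F}1_{U^F}]_K).$$
Indeed, $d$ is determined by Brauer characters, i.e.\ by character values at $p'$-elements. For any character $\chi$ of the $p$-group $U^F$, the induced character $\mathrm{Ind}_{U^F}^{G^F}\chi$ vanishes at every nontrivial $p'$-element $g$ (no $G^F$-conjugate of $g$ can lie in $U^F$) and takes the value $[G^F:U^F]$ at the identity, regardless of $\chi$. So the three relevant induced characters all have the same Brauer character; denote this common class by $\gamma\in G_0(kG^F)$.

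Applying $d$ to Lemma~\ref{lemm:crytalline rep of C and GG rep} and using Lemma~\ref{lemma:crystalline and de Rham k-rep of C} for the left-hand side, the $\gamma$-contributions on the right-hand side collapse as $\gamma+\gamma-\gamma=\gamma$, yielding
$$2\sum_{i=0}^{q-2}[V_i]_k\;=\;\gamma\;-\;d([\mathrm{Ind}_{B^F}^{G^F}1_{B^F}]_K)\;+\;2[V_0]_k,$$
so that
$$\gamma\;=\;2\sum_{i=1}^{q-2}[V_i]_k\;+\;d([\mathrm{Ind}_{B^F}^{G^F}1_{B^F}]_K).$$

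It remains to evaluate $d([\mathrm{Ind}_{B^F}^{G^F}1_{B^F}]_K)$. Over $K$ one has the classical decomposition $\mathrm{Ind}_{B^F}^{G^F}1_{B^F}=1_{G^F}+\mathrm{St}$. The Steinberg $\mathrm{St}$ has a unique $W$-lattice lift whose reduction mod $p$ is $V_{q-1}$ (the characteristic-$p$ Steinberg module, which is simple and projective of dimension $q$); equivalently, the permutation module $k[\mathbb{P}^1(\mathbb{F}_q)]$ has composition factors $V_0$ and $V_{q-1}$, forced by the dimension identity $1+q=q+1$ together with the visible trivial submodule and Steinberg quotient. Thus $d([\mathrm{Ind}_{B^F}^{G^F}1_{B^F}]_K)=[V_0]_k+[V_{q-1}]_k$, and substituting gives $\gamma=2\sum_{i=1}^{q-2}[V_i]_k+[V_0]_k+[V_{q-1}]_k$, as asserted. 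The main (and only nontrivial) obstacle is the identification of the reduction of $\mathrm{St}$, but this is well known for $\mathrm{SL}_2(\mathbb{F}_q)$; everything else is a short linear-algebra computation in $G_0(kG^F)$.
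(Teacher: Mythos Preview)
Your proof is correct and follows essentially the same approach as the paper's: both apply $d$ to Lemma~\ref{lemm:crytalline rep of C and GG rep}, invoke Lemma~\ref{lemma:crystalline and de Rham k-rep of C} on the left, and use that $\Gamma_1$, $\Gamma_2$, $\mathrm{Ind}_{U^F}^{G^F}1_{U^F}$ share the same Brauer character (since $U^F$ is a $p$-group). The only cosmetic differences are the order of the steps and that the paper cites \cite[10.2.9(c)]{Bonnafe_2011_rep_SL2_book} for $d([\mathrm{Ind}_{B^F}^{G^F}1_{B^F}]_K)=[V_0]_k+[V_{q-1}]_k$, whereas you derive it directly via $d([\mathrm{St}]_K)=[V_{q-1}]_k$.
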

\begin{proof}
By Lemma~\ref{lemma:crystalline and de Rham k-rep of C}, Lemma~\ref{lemm:crytalline rep of C and GG rep}, and \cite[10.2.9(c)]{Bonnafe_2011_rep_SL2_book} we see that
$$d([\Gamma_1]_K+[\Gamma_2]_K - [\mathrm{Ind}_{U^F}^{G^F}1_{U^F}]_K)=2\sum_{i=1}^{q-2} [V_i]_k+[V_0]_k+[V_{q-1}]_k.$$
Since $U^F$ is a Sylow $p$-subgroup of $G^F$, the Brauer characters of $\Gamma_1$, $\Gamma_2$, and $\mathrm{Ind}_{U^F}^{G^F}1_{U^F}$ are the same by construction, which implies that $d([\Gamma_1]_K)=d([\Gamma_2]_K)=d([\mathrm{Ind}_{U^F}^{G^F}1_{U^F}]_K)$ (\cite[18.2]{serre1977lin_rep_finite_gr}). Hence the assertion holds.
\end{proof}

Alternatively, one can also deduce this corollary using the observation $[kG^F]_k=q\cdot d([\Gamma_i]_K)$ together with the knowledge of the character table of $G^F$ and full of \cite[10.2.9]{Bonnafe_2011_rep_SL2_book}.

\bibliographystyle{alpha}
\bibliography{zchenrefs}

\end{document}